\newtheorem{theorem}{Theorem}[section]
\newtheorem{lemma}[theorem]{Lemma}
\theoremstyle{definition}
\newtheorem{definition}[theorem]{Definition}
\newtheorem{example}[theorem]{Example}
\newtheorem{remark}[theorem]{Remark}
\numberwithin{equation}{section}
\newcommand{\R}{\mathbb R} 
\newcommand{\N}{\mathbb N} 
\newcommand{\Q}{\mathbb Q} 
\newcommand{\T}{\mathbb T} 
\newcommand{\Meas}{\mathcal{M}} 
\newcommand{\Prob}{\mathcal{P}} 
\newcommand{\abs}[1]{\left\lvert#1\right\rvert} 
\newcommand{\norm}[1]{\left\lVert#1\right\rVert} 
\newcommand{\iprod}[1]{\left\langle#1\right\rangle} 
\DeclareMathOperator*{\esssup}{ess\,sup}
\DeclareMathOperator{\SGN}{sign}
\DeclareMathOperator{\EX}{\mathbf{E}}
\DeclareMathOperator{\PR}{\mathbf{P}}
\DeclareMathOperator{\CHT}{\mathbf{1}}
\begin{document}

\setcounter{page}{1}

\title[Metric functionals on $L_{p}$]{Characterizing the metric compactification of $L_{p}$ spaces by random measures}

\author[A. W. Guti{é}rrez]{Armando W. Guti{é}rrez}

\address{Department of Mathematics and Systems Analysis, Aalto University, Otakaari 1 Espoo, Finland}

\email{\textcolor[rgb]{0.00,0.00,0.84}{wladimir.gutierrez@aalto.fi}}

\subjclass[2010]{Primary 54D35; Secondary 46E30, 46B20, 60G57}

\keywords{metric compactification, horofunction compactification, metric functional, random measure, Banach spaces}


\begin{abstract}
	We present a complete characterization of the metric compactification of $L_{p}$ spaces
	for $1\leq p < \infty$. Each element of the metric compactification of $L_{p}$
	is represented by a random measure on a certain Polish space. By way of illustration, 
	we revisit the $L_{p}$--mean ergodic theorem for $1 < p < \infty$, and
	Alspach's example of an isometry on a weakly compact convex subset of $L_{1}$
	with no fixed points.
\end{abstract} 
\maketitle
\section{Introduction}
The metric compactification, which is known as the horofunction compactification in the 
setting of proper geodesic metric spaces, has been extensively studied during the last 
twenty years. Remarkably, the foundations of this compactification can be traced back 
to the early 1980s when Gromov \cite{Gromov1981, Ballmann_Gromov_Schroeder1985}
introduced a technique to attach certain boundary points at infinity of every proper 
geodesic metric space. In 2002, Rieffel \cite{Rieffel2002} identified the metric 
compactification of every complete locally compact metric space with the maximal 
ideal space of a unital commutative $C^{*}$-algebra. 

The modern procedure for constructing 
the metric compactification of general (not necessarily proper) metric spaces was discussed 
in \cite{Gaubert_Vigeral2012, Maher_Tiozzo2016}. Every metric space is continuously injected 
into its metric compactification which becomes metrizable provided that the metric space is 
separable. Moreover, surjective isometries between metric spaces can be extended continuously 
to homeomorphisms between the corresponding metric compactifications.

Banach spaces form an important class of metric spaces for which the metric compactification
deserves further study. Complete characterizations of the metric compactification of 
finite- and infinite-dimensional $\ell_{p}$ spaces were presented by the author in 
\cite{Gutierrez2017} and \cite{Gutierrez2018}, respectively. More studies in this context 
can be found in \cite{Karlsson_Metz_Noskov2006, Walsh2007, Lizhen_Schilling2017, Walsh2018}.

The purpose of this paper is to give a complete characterization of the metric compactification 
of the Banach space $L_{p}(\Omega,\Sigma,\PR)$ for all $1\leq p < \infty$, where 
$(\Omega,\Sigma,\PR)$ is a non-atomic standard probability space. 

Several works have confirmed the importance of the metric compactification as a topological 
and geometric tool for the study of isometry groups
\cite{Klein_Nicas2009, Lemmens_Walsh2011, Walsh2011ACTNIL, Walsh2014, Walsh2014HG},
random walks on hyperbolic groups \cite{Bjorklund2010, Gouezel2017},
random product of semicontractions 
\cite{Karlsson_Margulis1999, Karlsson_Ledrappier2011, Gouezel_Karlsson2015, Gouezel2018ICM},
Denjoy-Wolff theorems 
\cite{Karlsson2001, Karlsson2014, Budzynska_Kuczumow_Reich2013, Abate_Raissy2014, Lemmens_Lins_Nussbaum_Wortel2018},
Teichm\"uller spaces \cite{Alessandrini_Liu_Papadopoulos_Su2016}, 
limit graphs \cite{Dangeli_Donno2016}, random triangulations \cite{Curien_Menard2018},
and first-passage percolation \cite{Auffinger_Damron_Hanson2015}. This list is by no means 
exhaustive, but it gives the reader a brief overview of the variety of applications where 
the metric compactification or some of its elements appear. The notion of the metric 
compactification plays an essential role in the development of a metric spectral theory 
proposed by Karlsson \cite{Karlsson2018}. 

\section{Preliminaries}
Let $(X,d)$ be a metric space with an arbitrary base point $x_{0}\in X$. Consider the mapping 
$y\mapsto\mathbf{h}_{y}$ from $X$ into $\R^{X}$ defined by 
\begin{equation}\label{GenMetricFunc}
	\mathbf{h}_{y}(\cdot) := d(\cdot,y)-d(x_{0},y).
\end{equation}
Denote by $\mathrm{Lip}_{x_{0}}^{1}(X;\R)$ the space of $1$-Lipschitz real-valued functions 
on $X$ vanishing at $x_{0}$. It is straightforward to verify that 
\begin{equation}\label{inclusions}
	\{ \mathbf{h}_{y} \,\vert\, y\in X \} \,
		\subset \, \mathrm{Lip}_{x_{0}}^{1}(X;\R) \,
		\subset \prod_{x\in X}[-d(x_{0},x),d(x_{0},x)] \,
		\subset \R^{X}.
\end{equation} 
Tychonoff's theorem asserts that the Cartesian product in \eqref{inclusions} is compact in
the topology of pointwise convergence, which in turn implies that the space 
$\mathrm{Lip}_{x_{0}}^{1}(X;\R)$ is also compact as it is closed in this topology. 

\begin{definition}
The \emph{metric compactification} of $(X,d)$, denoted by $\overline{X}^{h}$, is defined to be 
the pointwise closure of the family $\{ \mathbf{h}_{y} \,\vert\, y\in X \}$. Every element 
$\mathbf{h}\in\overline{X}^{h}$ is called a \emph{metric functional} on $X$. Metric functionals 
of the form \eqref{GenMetricFunc} are called \emph{internal}.
\end{definition}

For every metric functional $\mathbf{h}\in\overline{X}^{h}$, there exists a net of points 
$(y_{\alpha})_{\alpha}$ in $X$ such that the net of internal metric functionals
$(\mathbf{h}_{y_{\alpha}})_{\alpha}$ converges pointwise on $X$ to $\mathbf{h}$. If the metric 
space is separable, the topology in $\overline{X}^{h}$ is metrizable. Hence, sequences are 
sufficient to describe metric funcionals on separable metric spaces. Also, the choice of the 
basepoint $x_{0}$ is irrelevant as different basepoints produce homeomorphic metric 
compactifications. See \cite{Gaubert_Vigeral2012, Maher_Tiozzo2016} for more details.

In general, the mapping $y\mapsto \mathbf{h}_{y}$ given by \eqref{GenMetricFunc} is always 
a continuous injection. It becomes a homeomorphism onto its image whenever the metric space 
$(X,d)$ is proper\footnote{Every closed and bounded subset of $X$ is compact.} and 
geodesic\footnote{For every pair of points $x,y\in X$, there is an isometry from the interval 
$[0,d(x,y)]$ into $X$.}. In this particular case, the metric compactification coincides
with Gromov's original definition of horofunction compactification which is obtained as the 
closure of $\{ \mathbf{h}_{y} \,\vert\, y\in X \}$ with respect to the topology of uniform 
convergence on bounded subsets of $X$; see \cite{Karlsson2018}.

\begin{example}
The set $\R$ of real numbers with the metric induced by the absolute value is proper and 
geodesic. Its metric/horofunction compactification $\overline{\R}^{h}$ contains the internal
metric functionals 
\begin{equation}\label{Rinternalmet}
	s\mapsto \abs{s-r}-\abs{r} \text{ with } r\in\R,
\end{equation}
and the two additional "points at infinity" given by
\begin{equation}\label{Rinfinitemet}
	s\mapsto -s, \quad s\mapsto s. 
\end{equation}
\end{example}

The elements of the compact space $\overline{\R}^{h}$ are used throughout the following sections, 
so it is convenient to establish the following special notations: functions of the form \eqref{Rinternalmet} are 
denoted by $\eta_{r}$ with $r\in\R$, and the two functions in \eqref{Rinfinitemet} are denoted
by $\eta_{+\infty}$ and $\eta_{-\infty}$, respectively. Therefore, the metric compactification 
of $(\R,\abs{\cdot})$ is the compact Polish space 
\begin{equation}\label{Rcompactified}
	\overline{\R}^{h}=\{\eta_{r}\,\vert\, r\in\R\}\cup\{\eta_{+\infty},\eta_{-\infty}\}.
\end{equation} 

Throughout the paper, we assume that $(\Omega,\Sigma,\PR)$ is a non-atomic standard probability 
space. We adopt the convention that all equalities involving measurable sets or measurable 
functions are assumed to hold modulo $\PR$-null sets. For every $1\leq p <\infty$, we denote by 
$L_{p}(\Omega,\Sigma,\PR)$, or simply $L_{p}$ when no confusion arises, the Banach space of 
measurable functions $f:\Omega\to\R$ with finite $L_{p}$-norm
\begin{equation*}
	\norm{f}_{L_{p}}:=\big( \EX[\abs{f}^{p}] \big)^{1/p} =
		\bigg(\int_{\Omega}\abs{f(\omega)}^{p}d\PR(\omega)\bigg)^{1/p}.
\end{equation*}
For simplicity we choose the zero function as the basepoint. For each $g\in L_{p}$, the internal 
metric functional $\mathbf{h}_{g}$ in \eqref{GenMetricFunc} becomes  
\begin{equation}\label{MetricFuncLp}
	\mathbf{h}_{g}(\cdot) = \norm{\cdot-g}_{L_{p}}-\norm{g}_{L_{p}}.
\end{equation} 
The metric compactification $\overline{L_{p}}^{h}$ of the Banach space $L_{p}$ is the set of all 
pointwise accumulation points of \eqref{MetricFuncLp}. We present explicit formulas for these limits 
by means of random measures on $\overline{\R}^{h}$. 

The notion of random measure was introduced 
by Aldous \cite{Aldous1981}; however, there are various equivalent approaches to random
measures \cite{Garling1982, Crauel2002, Kallenberg2017}.
Let $\Meas(\overline{\R}^{h})$ denote the space of all signed Borel measures $\mu$ on the compact
space $\overline{\R}^{h}$ such that the total variation $\abs{\mu}(\overline{\R}^{h})$ is finite. 
The linear space $\Meas(\overline{\R}^{h})$ becomes a Banach space with respect to the norm 
$\norm{\mu}_{\Meas(\overline{\R}^{h})} = \abs{\mu}(\overline{\R}^{h})$. 
Let $C(\overline{\R}^{h})$ denote the space of all real-valued continuous functions on the compact 
space $\overline{\R}^{h}$. The linear space $C(\overline{\R}^{h})$ endowed with the norm 
\begin{equation*}
	\norm{\varphi}_{C(\overline{\R}^{h})}=\sup_{\eta\in\overline{\R}^{h}}\abs{\varphi(\eta)}
\end{equation*}
becomes a Banach space.
The Riesz representation theorem (see e.g., \cite[p.~78]{Albiac_Kalton2016}) asserts that the 
dual space $C(\overline{\R}^{h})^{*}$, equipped with the usual dual norm, 
can be identified with $\Meas(\overline{\R}^{h})$ under 
the isometric isomorphism $\iota :\Meas(\overline{\R}^{h}) \to C(\overline{\R}^{h})^{*}$,
$\mu\mapsto\iota(\mu)$ given by
\begin{equation*}
	\iprod{\varphi,\iota(\mu)} =
		\int_{\overline{\R}^{h}}\varphi(\eta)d\mu(\eta) \;\text{ for all } \varphi \in C(\overline{\R}^{h}). 
\end{equation*}
We denote by $\Prob(\overline{\R}^{h})$ the set of all Borel probability measures on $\overline{\R}^{h}$,
i.e.,
\begin{equation*}
	\Prob(\overline{\R}^{h}) 
		= \Big\{ \mu \in \Meas(\overline{\R}^{h})\ \Big|
			\begin{array}{l}
			\iprod{\varphi,\mu} \geq 0 \quad \forall \varphi \in C(\overline{\R}^{h}), \varphi \geq 0 \\
			\mu(\overline{\R}^{h}) = 1
		  	\end{array}\Big\}.	
\end{equation*}
Let $\Lambda_{\infty}(\Omega,\Meas(\overline{\R}^{h}))$ denote the space of mappings 
$\omega\mapsto\xi_{\omega}$ from $\Omega$ to $\Meas(\overline{\R}^{h})$ with the following
properties
\begin{enumerate}[(i)]
	\item the function $\omega\mapsto \iprod{\varphi,\xi_{\omega}}$ is measurable 
	for all $\varphi\in C(\overline{\R}^{h})$,
	\item the function $\omega\mapsto \norm{\xi_{\omega}}_{\Meas(\overline{\R}^{h})}$
	is essentially bounded.
\end{enumerate}
We say that an element $\xi$ of $\Lambda_{\infty}(\Omega,\Meas(\overline{\R}^{h}))$	is a 
\emph{random measure} on $\overline{\R}^{h}$ whenever $\xi_{\omega}\in\Prob(\overline{\R}^{h})$ 
for $\PR$-almost every $\omega\in\Omega$. Additionally, if 
$\xi_{\omega}\left(\{\eta_{-\infty},\eta_{+\infty}\}\right)=0$ for $\PR$-almost every $\omega\in\Omega$, 
we say that $\xi$ is a random measure on $\R$.

\section{Main results}
\begin{theorem}\label{L1compactification}	
	If $\mathbf{h}\in \overline{L_{1}}^{h}$ then there exists a random measure 
	$\xi$ on $\overline{\R}^{h}$ such that
	\begin{equation}\label{MetricFuncL1}
		 \mathbf{h}(f)=\EX\left[\int_{\overline{\R}^{h}}\eta(f)d\xi(\eta)\right],
	\end{equation}
	for all $f\in L_{1}$. Conversely, if $\xi$ is a random measure on $\overline{\R}^{h}$ 
	then \eqref{MetricFuncL1} defines an element of $\overline{L_{1}}^{h}$.
\end{theorem}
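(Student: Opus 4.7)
Both directions rest on the observation that every internal metric functional already has the form \eqref{MetricFuncL1} for a Dirac random measure. Indeed
\[
\mathbf{h}_g(f) = \int_\Omega \bigl(\abs{f(\omega)-g(\omega)} - \abs{g(\omega)}\bigr)\,d\PR(\omega) = \int_\Omega \eta_{g(\omega)}(f(\omega))\,d\PR(\omega),
\]
so $\xi^g_\omega := \delta_{\eta_{g(\omega)}}$ realizes \eqref{MetricFuncL1} with $\mathbf{h}_g$ on the left. The theorem therefore asserts that the pointwise closure $\overline{L_1}^h$ coincides with the image of all random measures under the map $\xi \mapsto \mathbf{h}_\xi$ defined by the right-hand side of \eqref{MetricFuncL1}.

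\emph{Forward direction.} Given $\mathbf{h}_{g_\alpha} \to \mathbf{h}$ pointwise, I would take a weak$^*$ limit of the Dirac random measures $\xi^{g_\alpha}$. The space $\Lambda_\infty(\Omega,\Meas(\overline{\R}^h))$ is naturally identified with the dual of the Bochner space $L_1(\Omega;C(\overline{\R}^h))$ via the canonical pairing, and $C(\overline{\R}^h)$ is separable because $\overline{\R}^h$ is compact metrizable; Banach--Alaoglu then delivers weak$^*$ compactness of the closed unit ball. Since each $\xi^{g_\alpha}$ has norm one, a subnet satisfies $\xi^{g_\alpha} \rightharpoonup^* \xi$. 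For fixed $f \in L_1$ the test function $H_f(\omega,\eta):=\eta(f(\omega))$ lies in $L_1(\Omega;C(\overline{\R}^h))$ because $\eta\mapsto\eta(f(\omega))$ is continuous on $\overline{\R}^h$ by the very definition of its topology and $\abs{\eta(f(\omega))}\leq\abs{f(\omega)}\in L_1(\Omega)$; pairing against $H_f$ yields \eqref{MetricFuncL1} in the limit. To see that $\xi_\omega \in \Prob(\overline{\R}^h)$ almost everywhere, test instead against $u(\omega)\varphi(\eta)$ with $u \in L_1$, $u\geq 0$, and $\varphi$ running over a countable dense subset of the positive cone of $C(\overline{\R}^h)$, together with $\varphi\equiv 1$.

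\emph{Converse direction.} First, $\mathbf{h}_\xi$ is $1$-Lipschitz (each $\eta$ is $1$-Lipschitz on $\R$ and $\xi_\omega$ is a probability measure) with $\mathbf{h}_\xi(0)=0$ (each $\eta(0)=0$). To place $\mathbf{h}_\xi$ in $\overline{L_1}^h$ I would exhibit a sequence $g_n \in L_1$ with $\mathbf{h}_{g_n} \to \mathbf{h}_\xi$ pointwise. First, approximate $\xi$ weak$^*$ by piecewise constant, finitely supported random measures $\xi^n$: take an increasing sequence of finite partitions $P_n$ generating $\Sigma$, replace $\xi$ by its $P_n$-conditional mean (a martingale argument), and on each atom $B \in P_n$ approximate the resulting fixed probability measure on $\overline{\R}^h$ by a convex combination of Diracs at points of a countable dense subset. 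It then suffices to realize each such $\xi^n$ as $\lim_k \mathbf{h}_{g_{n,k}}$ with $g_{n,k} \in L_1$. Here non-atomicity enters: on each atom $B$ of $P_n$, subdivide $B$ into $k$ pieces of equal measure, and inside each such piece further subdivide into sub-pieces whose relative measures match the prescribed weights on $B$; define $g_{n,k}$ to take the corresponding real value on each sub-piece, using $\pm k$ in place of $\eta_{\pm\infty}$. As $k\to\infty$, the indicator of the sub-pieces associated with a given Dirac atom converges weak$^*$ in $L_\infty$ to that weight times $\CHT_B$, and together with dominated convergence (for the $\pm k$ replacements) this gives $\mathbf{h}_{g_{n,k}}(f)\to\mathbf{h}_{\xi^n}(f)$ for every $f\in L_1$; a diagonal extraction then completes the proof.

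\emph{Main obstacle.} The bulk of the work is in the converse, where three independent approximations have to be combined: passing to a countable dense subset of $\overline{\R}^h$, discretizing the $\omega$-dependence by finite partitions, and replacing the atoms at $\eta_{\pm\infty}$ by unbounded real values, all arranged so that the Dirac random measures $\delta_{\eta_{g_{n,k}(\cdot)}}$ converge weakly against every $f \in L_1$ simultaneously. The non-atomicity of $(\Omega,\Sigma,\PR)$ is indispensable throughout this shuffling, since on a purely atomic probability space no Dirac random measure can weakly approximate a non-Dirac one.
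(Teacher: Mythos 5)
Your proposal is correct and follows essentially the same route as the paper: the forward direction via weak$^{*}$ compactness of Dirac random measures in $L_{1}(\Omega,C(\overline{\R}^{h}))^{*}$ (testing positivity against $u\varphi$ over a countable dense set and total mass against $\varphi\equiv 1$), and the converse via partition-wise conditional averaging of $\xi$, finite Dirac approximation on each atom realized through non-atomic subdivision, with $\pm\infty$ replaced by large real values and a diagonal extraction. The only difference is cosmetic: the paper indexes the converse construction by the directed set of all finite measurable partitions rather than an increasing sequence, which is immaterial since $\overline{L_{1}}^{h}$ is metrizable.
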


\begin{remark}
	Since $\xi$ and $f$ are defined on $\Omega$, the expectation operator in the representation 
	formula \eqref{MetricFuncL1} should be interpreted as 
	\begin{equation*}
			\EX\left[\int_{\overline{\R}^{h}}\eta(f)d\xi(\eta)\right] =
			\int_{\Omega}\int_{\overline{\R}^{h}}\eta(f(\omega))d\xi_{\omega}(\eta)d\PR(\omega).
	\end{equation*}	 
\end{remark}

\begin{example}
Let $A$ and $B$ be two disjoint measurable subsets of $\Omega$. Suppose that $\xi$ is a random 
measure on $\overline{\R}^{h}$ defined by
\begin{align*}
	\xi_{\omega}:=\begin{cases}
						\delta_{\eta_{+\infty}}, \text{ if } \omega\in A,\\
						\delta_{\eta_{-\infty}}, \text{ if } \omega\in B,\\
						\delta_{\eta_{g(\omega)}}, \text{ otherwise}, 					
					\end{cases}
\end{align*}
where $g:\Omega\to\R$ is a measurable function. Then the metric functional on $L_{1}$ given by 
the formula \eqref{MetricFuncL1} becomes
\begin{equation}\label{sMFL1}
	\mathbf{h}(f)=
		\EX\left[-\CHT_{A}f\right] + \EX\left[\CHT_{B}f\right] +
		\EX\left[\CHT_{\Omega\setminus(A\cup B)}(\abs{f-g}-\abs{g})\right], 
\end{equation}	
for all $f\in L_{1}$. The metric functionals of the form \eqref{sMFL1} can be compared with 
those describing the metric compactification of the sequence space $\ell_{1}$; 
see \cite{Gutierrez2018}. Furthermore, if $A$ and $B$ are sets of measure zero, and $g\in L_{1}$ 
then \eqref{sMFL1} becomes the internal metric functional 
$\mathbf{h}_{g}(\cdot)=\norm{\cdot - g}_{L_{1}}-\norm{g}_{L_{1}}$.
\end{example}

\begin{example}
Let $\Omega$ be the open interval $\left[0,1\right]$ and let $\PR$ be the Lebesgue measure. 
For each $n\in\N$, let $A_{n}$ and $B_{n}$ be the intervals
\begin{equation*}
	A_{n} = \left[\frac{1}{2}-\frac{1}{n+1}\;,\;\frac{1}{2}\right], \quad
	B_{n} = \left]\frac{1}{2}\;,\;\frac{1}{2}+\frac{1}{n+1}\right],
\end{equation*}
and define $g_{n}\in L_{1}$ by $g_{n}:= -n^{2}\CHT_{A_{n}} + n^{2}\CHT_{B_{n}}$.
It follows that
\begin{equation*}
	\norm{g_{n}}_{L_{1}}=2n^{2}/(n+1) \to \infty \text{ as } n \to \infty. 
\end{equation*}
In particular, the sequence
$(g_{n})_{n\in\N}$ does not converge to zero in $L_{1}$-norm. However, for every $f\in L_{1}$ 
we obtain
\begin{align*}
	\mathbf{h}_{g_{n}}(f) 
		&= \norm{f - g_{n}}_{L_{1}}-\norm{g_{n}}_{L_{1}} \\
		&= \EX\left[\abs{f-g_{n}}-\abs{g_{n}}\right] \\
		&= \EX\left[\CHT_{A_{n}} \left(\abs{f+n^{2}}-n^{2}\right)\right]
			+ \EX\left[ \CHT_{B_{n}} \left(\abs{f-n^{2}}-n^{2}\right)\right] \\
		&\qquad + \EX\left[ \CHT_{\Omega \setminus (A_{n}\cup B_{n})} \abs{f}\right] \\
		&\underset{n\to\infty}\longrightarrow 
			\EX\left[\abs{f}\right] = \mathbf{h}_{0}(f).
\end{align*}
This shows that although the sequence $(g_{n})_{n\in\N}$ does not converge to zero in 
$L_{1}$-norm, it does converge to zero in the metric compactification $\overline{L_{1}}^{h}$. 
The internal metric functional $\mathbf{h}_{0}$ has the representation \eqref{MetricFuncL1}
with the (constant) random measure $\omega\mapsto\xi_{\omega}=\delta_{\eta_{0}}$ with 
$\eta_{0}\in\overline{\R}^{h}$.
\end{example}

\begin{example}
For each $n\in\N$, let $A_{n}$ and $B_{n}$ be the intervals defined in the previous example. 
Now, we define $g_{n}\in L_{1}$ by $g_{n}:= -n\CHT_{A_{n}} + n\CHT_{B_{n}}$.
Then
\begin{equation*}
 	\norm{g_{n}}_{L_{1}}=2n/(n+1) < 2 \text{ for all } n \in \N. 
\end{equation*}
The sequence $(g_{n})_{n\in\N}$ is 
bounded in $L_{1}$, but it does not converge to zero in $L_{1}$-norm. However, as in the 
previous example, we have $\mathbf{h}_{g_{n}}\to \mathbf{h}_{0}$ as $n\to\infty$.
\end{example}

\begin{example}
Let $A$ be a measurable subset of $\Omega$ with $\PR(A)>0$. Suppose that $g:\Omega\to\R$ is 
an element of $L_{1}$. For each $n \in \N$, we define $g_{n} \in L_{1}$ by
$g_{n}:= n\CHT_{A} + g\CHT_{\Omega\setminus A}$ .
It follows that
\begin{equation*}
	\norm{g_{n}}_{L_{1}} = n\PR(A) + \EX\left[\CHT_{\Omega\setminus A}\abs{g}\right] 
		\to\infty \text{ as } n\to\infty.
\end{equation*}
On the other hand, for every $f\in L_{1}$ we obtain 
\begin{align*}
	\mathbf{h}_{g_{n}}(f) 
	    &= \EX\left[\abs{f-g_{n}}-\abs{g_{n}}\right] \\
	    &= \EX\left[\CHT_{A}\left(\abs{f-n}-n\right)\right]
	    		+ \EX\left[\CHT_{\Omega\setminus A}\left(\abs{f-g}-\abs{g}\right)\right] \\
		& \underset{n\to\infty}\longrightarrow
			\EX\left[-\CHT_{A}f\right] 
			+ \EX\left[\CHT_{\Omega\setminus A}\left(\abs{f-g}-\abs{g}\right)\right] = \mathbf{h}(f).					
\end{align*}
Hence $\mathbf{h}_{g_{n}}$ converges to the metric functional $\mathbf{h}$ which 
has the representation formula \eqref{MetricFuncL1}, where $\xi$ is the random measure on $\overline{\R}^{h}$
given by 
\begin{equation*}
	\xi = \CHT_{A}\delta_{\eta_{+\infty}}+\CHT_{\Omega\setminus A}\delta_{\eta_{g}}.
\end{equation*}
\end{example}

\begin{example}
Let $\Omega$ be the interval $\left[0,1\right]$ and let $\PR$ be the Lebesgue measure. Consider 
the Rademacher sequence $(g_{n})_{n\in\N}$ defined by $g_{n}(\omega):=\SGN(\sin(2^{n}\pi\omega))$
for all $n\in\N$ and $\omega\in\Omega$. It is not difficult to verify that $\mathbf{h}_{g_{n}}$ 
converges to the metric functional $\mathbf{h}$ with the representation formula \eqref{MetricFuncL1},
where $\xi$ is the constant random measure 
$\omega\mapsto\xi_{\omega}=\dfrac{1}{2}\delta_{\eta_{-1}}+\dfrac{1}{2}\delta_{\eta_{+1}}$.
\end{example}

The above examples are merely to demonstrate some of the different cases that can appear when 
determining the metric functionals on $L_{1}$. In \Cref{applications} we present some applications 
of the metric compactification of $L_{p}$ spaces. 

\begin{theorem}\label{Lpcompactification}
	Let $p \in ]1,\infty[$. 
	Every element of $\overline{L_{p}}^{h}$ has exactly one of the following forms:
	either
	\begin{equation}\label{FiniteMFLp}
		f\mapsto\mathbf{h}(f)=\left(\EX\left[
			\int_{\R}\abs{f-r}^{p}d\xi(r)\right]
			- \EX\left[\int_{\R}\abs{r}^{p}d\xi(r)\right]
			+ c^{p}\right)^{1/p} - c,
	\end{equation}
	where $\xi$ is a random measure on $\R$ and 	
	$c^{p}\geq \EX\left[\int_{\R}\abs{r}^{p}d\xi(r)\right]$;
	or  
	\begin{equation}\label{InfiniteMFLp}
			f\mapsto\mathbf{h}(f)=-\EX\left[f\zeta\right],
	\end{equation} 
	where $\zeta$ is an element of the closed unit ball of $L_{p/(p-1)}$.
\end{theorem}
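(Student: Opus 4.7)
The plan is to take any $\mathbf{h}\in\overline{L_{p}}^{h}$, realize it as $\mathbf{h}_{g_{n}}\to\mathbf{h}$ for some sequence $(g_{n})\subset L_{p}$, and pass to a subsequence along which $\norm{g_{n}}_{L_{p}}\to c\in[0,\infty]$. The cases $c<\infty$ and $c=\infty$ are treated separately and produce respectively the random-measure form \eqref{FiniteMFLp} and the linear form \eqref{InfiniteMFLp}.

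For the finite case $c<\infty$ I would associate to $g_{n}$ the random measure $\omega\mapsto\delta_{g_{n}(\omega)}\in\Prob(\overline{\R}^{h})$ and extract, by the same weak-$*$ compactness argument used to prove \Cref{L1compactification}, a further subsequence converging to some random probability measure $\xi$ on $\overline{\R}^{h}$. The Chebyshev tail bound $\PR(\abs{g_{n}}>R)\leq c^{p}/R^{p}$ together with continuous test functions supported near $\eta_{\pm\infty}$ forces $\xi_{\omega}(\{\eta_{+\infty},\eta_{-\infty}\})=0$ almost surely, so $\xi$ is a random measure on $\R$, and Fatou applied to $r\mapsto\abs{r}^{p}$ gives $\EX[\int_{\R}\abs{r}^{p}\,d\xi(r)]\leq c^{p}$. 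To recover \eqref{FiniteMFLp} I would start from $\norm{f-g_{n}}_{L_{p}}^{p}-\norm{g_{n}}_{L_{p}}^{p}=\EX[\abs{f-g_{n}}^{p}-\abs{g_{n}}^{p}]$ and split by $\abs{g_{n}}\leq R$ versus $\abs{g_{n}}>R$: on the bounded part, a continuous cutoff $\chi_{R}$ supported in $[-2R,2R]$ makes the integrand a continuous bounded function on $\overline{\R}^{h}$ and narrow convergence gives the intended limit; on the tail, the elementary bound $\big|\abs{f-g_{n}}^{p}-\abs{g_{n}}^{p}\big|\leq p\abs{f}(\abs{f}+\abs{g_{n}})^{p-1}$, combined with Hölder's inequality and the uniform bound on $\norm{g_{n}}_{L_{p}}$, forces the contribution to vanish as $R\to\infty$ uniformly in $n$. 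Passing to the limit and extracting a $p$-th root then yields \eqref{FiniteMFLp}.

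For the infinite case $c=\infty$ I would use that $L_{p}$ is uniformly smooth for $p>1$. Each $g_{n}\neq 0$ has a unique support functional $\zeta_{n}=\abs{g_{n}}^{p-2}g_{n}/\norm{g_{n}}_{L_{p}}^{p-1}\in L_{p/(p-1)}$ of unit norm, so by reflexivity and Banach--Alaoglu some subsequence has $\zeta_{n}\to\zeta$ weakly in $L_{p/(p-1)}$ with $\norm{\zeta}_{L_{p/(p-1)}}\leq 1$. Writing $u_{n}=g_{n}/\norm{g_{n}}_{L_{p}}$ and applying Fréchet differentiability of $\norm{\cdot}_{L_{p}}$ at $u_{n}$ with the uniform smoothness modulus gives $\mathbf{h}_{g_{n}}(f)=-\iprod{f,\zeta_{n}}+o(1)$ as $\norm{g_{n}}_{L_{p}}\to\infty$, uniformly for $f$ in bounded subsets of $L_{p}$, so passing to the weak limit yields $\mathbf{h}(f)=-\EX[f\zeta]$, which is \eqref{InfiniteMFLp}.

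For the converse, each formula is realized by an explicit sequence: a given $\zeta$ in the closed unit ball of $L_{p/(p-1)}$ is written as a weak limit of unit-norm duality maps $\zeta_{u_{n}}$ (using that $S_{L_{p/(p-1)}}$ equals the image of $S_{L_{p}}$ under the smooth duality map and is weakly dense in the unit ball), so that $g_{n}=nu_{n}$ produces \eqref{InfiniteMFLp}; a given pair $(\xi,c)$ with $\EX[\int\abs{r}^{p}d\xi]\leq c^{p}$ is matched by approximating $\xi$ with finitely-supported piecewise-constant random measures realized as step functions via the non-atomicity of $(\Omega,\Sigma,\PR)$, augmented by a concentrated blow-up on a vanishing set that inflates the norm to $c$. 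The two families are disjoint because \eqref{InfiniteMFLp} is linear in $f$ whereas \eqref{FiniteMFLp} is strictly convex outside degenerate configurations that already coincide with internal metric functionals. The main obstacle is the truncation analysis in the finite case: since $\abs{f-r}^{p}-\abs{r}^{p}$ is unbounded on $\overline{\R}^{h}$, narrow convergence has to be combined with sharp $L_{p/(p-1)}$-type tail estimates, and the converse construction has to simultaneously match both $\xi$ and the excess $c^{p}-\EX[\int\abs{r}^{p}d\xi]$.
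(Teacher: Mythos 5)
Your proposal is correct and follows the same overall architecture as the paper's proof: the dichotomy according to whether $\norm{g_{n}}_{L_{p}}$ stays bounded or tends to infinity, weak-star compactness of the random Dirac measures $\omega\mapsto\delta_{\eta_{g_{n}(\omega)}}$ in $L_{1}(\Omega,C(\overline{\R}^{h}))^{*}$ together with a tail bound killing the mass at $\eta_{\pm\infty}$ (this is exactly \Cref{lemmaA}), and smoothness/convexity duality in the unbounded case. Two points of genuine divergence are worth recording. First, in the bounded case the paper passes to the limit by evaluating the weak-star convergence of \Cref{lemmaA} against a map $\psi^{f}$ whose value at $\eta_{\pm\infty}$ is $\mp\SGN(f(\omega))\infty$; this $\psi^{f}$ is unbounded on $\overline{\R}^{h}$ and hence does not belong to $L_{1}(\Omega,C(\overline{\R}^{h}))$, so \Cref{lemmaA} does not literally apply. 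Your truncation scheme --- the cutoff $\chi_{R}$, the pointwise bound $\abs{\abs{f-g}^{p}-\abs{g}^{p}}\leq p\abs{f}(\abs{f}+\abs{g})^{p-1}$, H\"older, and the uniform smallness of $\norm{f\CHT_{\{\abs{g_{n}}>R\}}}_{L_{p}}$ --- is precisely the rigorous justification of that step, and is a genuine gain in precision over the paper. Second, for the unbounded case the paper invokes uniform convexity of $L_{p/(p-1)}$ and cites an external lemma, whereas you rederive the same expansion $\mathbf{h}_{g_{n}}(f)=-\iprod{f,\zeta_{n}}+o(1)$ from uniform smoothness of $L_{p}$ and Fr\'echet differentiability of the norm at $g_{n}/\norm{g_{n}}_{L_{p}}$; the two are equivalent by duality, but your version is self-contained. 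Your additional material (the converse for \eqref{FiniteMFLp} via step-function approximation plus a vanishing-support spike carrying the excess $c^{p}-\EX[\int_{\R}\abs{r}^{p}d\xi(r)]$, and the disjointness of the two families) goes beyond what the paper actually proves; the only caveat is that the cleanest disjointness argument is not \emph{linear versus strictly convex} but rather that \eqref{FiniteMFLp} is bounded below by $-c$ and is never identically zero (test on $f=t\CHT_{\Omega}$, $t\to\infty$), while a functional of the form \eqref{InfiniteMFLp} is either identically zero or unbounded below.
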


\begin{remark}
	If the metric functional \eqref{FiniteMFLp} is represented by the random measure $\xi$ 
	on $\R$ given by $\omega\mapsto\xi_{\omega}=\delta_{g(\omega)}$ with $g\in L_{p}$, then 
	the representation formula \eqref{FiniteMFLp} becomes
	\begin{align}
		f \mapsto \mathbf{h}(f) 
			&= \big(\EX\left[\abs{f-g}^{p}\right] - \EX\left[\abs{g}^{p}\right] 
				+ c^{p}\big)^{1/p} - c \nonumber \\ 
			&= \big(\norm{f-g}_{L_{p}}^{p}-\norm{g}_{L_{p}}^{p} 
				+ c^{p} \big)^{1/p} - c \;\text{ with } c^{p} \geq \norm{g}_{L_{p}}^{p}. \label{lpMF}
	\end{align}
\end{remark}

In \cite{Gutierrez2018}, the author showed that bounded nets in the infinite-dimensional 
$\ell_{p}$ space with $p \in ]1,\infty[$ can only produce metric functionals represented by formulas
analogous to \eqref{lpMF}. In the present paper, bounded nets in non-atomic 
$L_p$ spaces with $p \in ]1,\infty[$ yield metric functionals represented by the general formulas 
\eqref{FiniteMFLp}. 

\section{Proofs}
Throughout this section we use well-known facts about representations of certain dual spaces.
These results can be found in standard functional analysis books such as 
\cite{Dunford_Schwartz1958} or \cite{Edwards1995}.
Recall that $\Lambda_{\infty}(\Omega,\Meas(\overline{\R}^{h}))$ is the space of mappings 
$\omega\mapsto\xi_{\omega}$ from $\Omega$ to $\Meas(\overline{\R}^{h})$ such that for each 
$\varphi\in C(\overline{\R}^{h})$ the real-valued function
\begin{equation*}
	\omega\mapsto\iprod{\varphi,\xi_{\omega}}
		=\int_{\overline{\R}^{h}}\varphi(\eta) d\xi_{\omega}(\eta)
\end{equation*} 
is measurable, and the function $\omega\mapsto\norm{\xi_{\omega}}_{\Meas(\overline{\R}^{h})}$ 
is essentially bounded. The linear space $\Lambda_{\infty}(\Omega,\Meas(\overline{\R}^{h}))$ 
is a Banach space with respect to the norm 
\begin{equation*}
	\norm{\xi}_{\Lambda_{\infty}} = \esssup_{\omega\in\Omega}\norm{\xi_{\omega}}_{\Meas(\overline{\R}^{h})}. 
\end{equation*}
Let $L_{1}(\Omega,C(\overline{\R}^{h}))$ denote the linear space of all mappings 
$\psi:\Omega\to C(\overline{\R}^{h})$ such that the real-valued function 
$\omega\mapsto \norm{\psi_{\omega}}_{C(\overline{\R}^{h})}$ is measurable and 
$\EX\left[\norm{\psi}_{C(\overline{\R}^{h})}\right]$ is finite. The real-valued function
$\psi \mapsto \EX\left[\norm{\psi}_{C(\overline{\R}^{h})}\right]$ defines a norm on 
$L_{1}(\Omega,C(\overline{\R}^{h}))$.
The dual space $L_{1}(\Omega,C(\overline{\R}^{h}))^{*}$, equipped with the usual dual norm,
has the representation
\begin{equation}\label{VectorValuedL1duality}
	L_{1}(\Omega,C(\overline{\R}^{h}))^{*}\cong
	\Lambda_{\infty}(\Omega,\Meas(\overline{\R}^{h}))
\end{equation}
under the isometric isomorphism 
$\vartheta :\Lambda_{\infty}(\Omega,\Meas(\overline{\R}^{h}))\to L_{1}(\Omega,C(\overline{\R}^{h}))^{*}$, 
$\xi \mapsto \vartheta(\xi)$ defined by 
\begin{equation*}
	[\vartheta(\xi)](\psi) 
		:= \EX\left[\int_{\overline{\R}^{h}}\psi(\eta)d\xi(\eta)\right] 
				\text{ for all } \psi\in L_{1}(\Omega,C(\overline{\R}^{h})).
\end{equation*}

\begin{lemma}\label{lemmaA}
	If $(g_{\alpha})_{\alpha}$ is a net in $L_{1}$, then there exists a random measure $\xi$ on 
	$\overline{\R}^{h}$ and a subnet $(g_{\beta})_{\beta}$ such that the net of random measures 
	$(\delta_{\eta_{g_{\beta}}})_{\beta}$ on $\overline{\R}^{h}$ converges weakly-star to $\xi$ 
	in $L_{1}(\Omega,C(\overline{\R}^{h}))^{*}$. That is, 
\begin{equation}\label{WeakStarLimitL1}
	\EX\left[\psi(\eta_{g_{\beta}})\right] 
		{\underset{\beta}\longrightarrow} 
		\EX\left[\int_{\overline{\R}^{h}}\psi(\eta)d\xi(\eta)\right]
		\text{ for all } \psi\in L_{1}(\Omega,C(\overline{\R}^{h})). 
\end{equation}
Moreover, if the net $(g_{\beta})_{\beta}$
is bounded in $L_{1}$, i.e., $\sup_{\beta}\norm{g_{\beta}}_{L_{1}}<\infty$, then $\xi$ in 
\eqref{WeakStarLimitL1} is a random measure on $\R$.
\end{lemma}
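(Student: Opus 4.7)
The plan is to invoke Banach--Alaoglu on the closed unit ball of $\Lambda_{\infty}(\Omega,\Meas(\overline{\R}^{h}))$, using the duality \eqref{VectorValuedL1duality}, and then check that the limit has the structural properties of a random measure.

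First I would verify that each $\omega\mapsto\delta_{\eta_{g_{\alpha}(\omega)}}$ defines an element of $\Lambda_{\infty}(\Omega,\Meas(\overline{\R}^{h}))$ of norm $1$. For every $\varphi\in C(\overline{\R}^{h})$, the map $\omega\mapsto \iprod{\varphi,\delta_{\eta_{g_{\alpha}(\omega)}}} = \varphi(\eta_{g_{\alpha}(\omega)})$ is measurable, since $g_{\alpha}$ is measurable and the compositions $r\mapsto\eta_{r}$ and $\varphi$ are continuous on $\R$ and $\overline{\R}^{h}$, respectively. Moreover $\norm{\delta_{\eta_{g_{\alpha}(\omega)}}}_{\Meas(\overline{\R}^{h})}=1$ everywhere, so $\norm{\delta_{\eta_{g_{\alpha}}}}_{\Lambda_{\infty}}=1$. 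By Banach--Alaoglu applied to the weak-star topology coming from \eqref{VectorValuedL1duality}, the net $(\delta_{\eta_{g_{\alpha}}})_{\alpha}$ admits a weak-star convergent subnet $(\delta_{\eta_{g_{\beta}}})_{\beta}$ with some limit $\xi\in\Lambda_{\infty}(\Omega,\Meas(\overline{\R}^{h}))$, which is precisely the convergence \eqref{WeakStarLimitL1}.

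Next I would show that $\xi_{\omega}\in\Prob(\overline{\R}^{h})$ for $\PR$-almost every $\omega$. For any measurable $A\subset\Omega$ and any $\varphi\in C(\overline{\R}^{h})$, the test function $\omega\mapsto \CHT_{A}(\omega)\varphi$ lies in $L_{1}(\Omega,C(\overline{\R}^{h}))$. Plugging this into \eqref{WeakStarLimitL1} with $\varphi\geq 0$ yields $\EX[\CHT_{A}\iprod{\varphi,\xi_{\omega}}]\geq 0$ for every $A$, hence $\iprod{\varphi,\xi_{\omega}}\geq 0$ almost everywhere. Since $\overline{\R}^{h}\cong [-\infty,+\infty]$ is a compact metric space, $C(\overline{\R}^{h})$ is separable, and choosing a countable dense set of nonnegative $\varphi$'s gives $\xi_{\omega}\geq 0$ a.e.\ in $\omega$. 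Taking $\varphi$ identically equal to $1$ and the same $A$'s yields $\EX[\CHT_{A}\xi_{\omega}(\overline{\R}^{h})]=\PR(A)$, so $\xi_{\omega}(\overline{\R}^{h})=1$ a.e., completing the first claim.

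For the boundedness part, suppose $M:=\sup_{\beta}\norm{g_{\beta}}_{L_{1}}<\infty$. I would construct a sequence of continuous cutoffs $\varphi_{N}:\overline{\R}^{h}\to [0,1]$ with $\varphi_{N}(\eta_{r})=0$ for $\abs{r}\leq N$, $\varphi_{N}(\eta_{\pm\infty})=1$, and $\varphi_{N}\searrow \CHT_{\{\eta_{-\infty},\eta_{+\infty}\}}$ pointwise on $\overline{\R}^{h}$ as $N\to\infty$ (such cutoffs exist using the identification $\overline{\R}^{h}\cong [-\infty,+\infty]$). For each $\beta$,
\begin{equation*}
\EX\bigl[\varphi_{N}(\eta_{g_{\beta}})\bigr] \leq \PR(\abs{g_{\beta}}>N) \leq \frac{M}{N}
\end{equation*}
by Markov's inequality. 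Passing to the weak-star limit along $\beta$ and then letting $N\to\infty$ using the dominated convergence theorem (the integrand $\int\varphi_{N}d\xi_{\omega}$ is bounded by $1$ and tends pointwise to $\xi_{\omega}(\{\eta_{-\infty},\eta_{+\infty}\})$) yields $\EX[\xi_{\omega}(\{\eta_{-\infty},\eta_{+\infty}\})]=0$, so $\xi$ is a random measure on $\R$.

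The main obstacle I anticipate is the verification that the weak-star limit takes values in probability measures almost surely: care is required to upgrade the test-function inequalities, which a priori hold only modulo a $\varphi$-dependent null set, to a single null set via the separability of $C(\overline{\R}^{h})$. The cutoff/tightness argument in the bounded case is then a clean consequence of Markov's inequality and monotone convergence.
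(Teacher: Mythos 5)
Your proposal is correct and follows essentially the same route as the paper: Banach--Alaoglu in the dual $L_{1}(\Omega,C(\overline{\R}^{h}))^{*}\cong\Lambda_{\infty}(\Omega,\Meas(\overline{\R}^{h}))$, positivity and unit total mass of the limit via countably many test functions and separability of $C(\overline{\R}^{h})$, and a Markov-inequality tightness argument for the bounded case. The only differences are cosmetic (you test with indicators $\CHT_{A}$ rather than general non-negative integrable weights, and your cutoffs are concentrated at $\eta_{\pm\infty}$ rather than on the finite part), and your direct derivation of $\xi_{\omega}(\overline{\R}^{h})=1$ a.e.\ from $\EX[\CHT_{A}\,\xi_{\omega}(\overline{\R}^{h})]=\PR(A)$ for all $A$ is, if anything, slightly cleaner than the paper's norm argument.
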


\begin{proof}
For each $\alpha$, the mapping $\omega\mapsto\delta_{\eta_{g_{\alpha}(\omega)}}$ defines a 
random measure on $\overline{\R}^{h}$, i.e., 
$\delta_{\eta_{g_{\alpha}}}\in\Lambda_{\infty}(\Omega,\Meas(\overline{\R}^{h}))$ and
$\delta_{\eta_{g_{\alpha}(\omega)}}\in\Prob(\overline{\R}^{h})$ for $\PR$-almost every $\omega\in\Omega$. 
By the representation \eqref{VectorValuedL1duality}, the net 
$(\delta_{\eta_{g_{\alpha}}})_{\alpha}$ lies in the closed unit ball of the dual space 
$L_{1}(\Omega,C(\overline{\R}^{h}))^{*}$. By the Banach-Alaouglu theorem, there exists 
a subnet $(\delta_{\eta_{g_{\beta}}})_{\beta}$ and 
an element $\xi \in \Lambda_{\infty}(\Omega,\Meas(\overline{\R}^{h}))$ with $\norm{\xi}_{\Lambda_{\infty}}\leq 1$ 
such that $\delta_{\eta_{g_{\beta}}}$ converges weakly-star to $\xi$ in 
$L_{1}(\Omega,C(\overline{\R}^{h}))^{*}$. Hence \eqref{WeakStarLimitL1} holds.

Next, we will show that $\xi$ is a random measure on $\overline{\R}^{h}$, i.e., 
$\xi_{\omega}\in\Prob(\overline{\R}^{h})$ for $\PR$-almost every $\omega\in\Omega$. 
Since $C(\overline{\R}^{h})$ is separable, the space $C(\overline{\R}^{h})_+$ of
non-negative continuous functions on $\overline{\R}^{h}$ contains a dense countable
subset, say $(\varphi_k)_{k\in\N}$.
Fix $k\in\N$. For every non-negative integrable function $u:\Omega\to\R$,
we apply \eqref{WeakStarLimitL1} with the mapping $\psi \in L_{1}(\Omega,C(\overline{\R}^{h}))$,
$\omega \mapsto u(\omega)\varphi_k$ to obtain 
\begin{equation*}
		\EX\left[u\int_{\overline{\R}^{h}}\varphi_k(\eta)d\xi(\eta)\right] 
			= \lim_{\beta}\EX\left[u\varphi_k(\eta_{g_{\beta}})\right] \geq 0.
\end{equation*}
Hence there is $N_k \in \Sigma$ with $\PR(N_k)=0$ such that 
$\int_{\overline{\R}^{h}}\varphi_k(\eta)d\xi_\omega(\eta) \geq 0$ for all $\omega\in\Omega\setminus N_k$.
By letting $N=\cup_{k\in\N}N_k$, it follows that $\PR(N)=0$ and
\begin{equation}\label{positive}
	\int_{\overline{\R}^{h}}\varphi_k(\eta)d\xi_\omega(\eta) \geq 0 
	\text{ for all } \omega\in\Omega\setminus N \text{ and all } k\in\N.
\end{equation}
Let $\varphi:\overline{\R}^{h} \to \R$ be a non-negative continuous function.
By density, there exists a sequence $(\varphi_{k_i})_{i\in\N}$ satisfying \eqref{positive}
and such that $\norm{\varphi_{k_i} - \varphi}_{C(\overline{\R}^{h})} \to 0 \text{ as } i\to\infty$.
Therefore, 
\begin{equation*}
	\int_{\overline{\R}^{h}}\varphi(\eta)d\xi_\omega(\eta) \geq 0 
	\text{ for all } \omega\in\Omega\setminus N.
\end{equation*}
This shows that $\xi_\omega$ is a positive measure for all $\omega\in\Omega\setminus N$ with $\PR(N)=0$.
Furthermore, if we apply \eqref{WeakStarLimitL1} with the mapping 
$\omega \mapsto \psi_\omega = \CHT_{\overline{\R}^{h}}$, we obtain 
\begin{equation*}
	\norm{\xi}_{\Lambda_{\infty}} 
		\geq \EX\left[\norm{\xi}_{\Meas(\overline{\R}^{h})} \right] 
	    = \EX\left[\int_{\overline{\R}^{h}}\psi(\eta)d\xi(\eta)\right] 
	    = \lim_{\beta}\EX\left[\psi(\eta_{g_{\beta}})\right] 
	    = \PR(\Omega)=1.
\end{equation*}
Hence $\norm{\xi}_{\Lambda_{\infty}} = 1$, and therefore $\xi$ is a random measure on 
$\overline{\R}^{h}$.

For the last part of the lemma we need to prove that $\xi_{\omega}(\{\eta_{+\infty},\eta_{-\infty}\})=0$ 
for $\PR$-almost every $\omega\in\Omega$. Let $C=\sup_{\beta}\norm{g_{\beta}}_{L_1}$ and let $\epsilon$ be a 
small positive real number. Define the compact interval
\begin{equation*}
	K_{\epsilon}:=\left[-(C+1)\epsilon^{-1}\,,\,(C+1)\epsilon^{-1}\right].
\end{equation*}
Let $\varphi^{\epsilon}$ denote a continuous function on $\R$ with compact support such that
$\varphi^{\epsilon}=1$ on $K_{\epsilon}$ and $0\leq \varphi^{\epsilon}\leq 1$ on $\R$.
For each $\omega\in\Omega$ define the mapping $\psi_{\omega}^{\epsilon}\in C(\overline{\R}^{h})$ 
by
\begin{equation*}
	\psi_{\omega}^{\epsilon}(\eta):=
		\begin{cases}
			\varphi^{\epsilon}(r), &\text{ if } \eta=\eta_{r},\\
			0, &\text{ if } \eta=\eta_{\pm \infty}.
		\end{cases}
\end{equation*}
Then, for every $\beta$ we have
\begin{align*}
	0  \leq \EX\left[1-\psi^{\epsilon}(\eta_{g_{\beta}})\right] 
	  & = \EX\left[\CHT_{\Omega\setminus g_{\beta}^{-1}(K_{\epsilon})}
			\left(1-\psi^{\epsilon}(\eta_{g_{\beta}})\right)\right] \\
	  & \leq \PR (\Omega\setminus g_{\beta}^{-1}(K_{\epsilon})) \\
	  & = \PR\big(\{\omega\in\Omega \mid \abs{g_{\beta}(\omega)} > (C+1)\epsilon^{-1} \}\big) \\
	  & \leq (C+1)^{-1}\epsilon\EX\left[\abs{g_{\beta}}\right] \\
	  & \leq (C+1)^{-1}\epsilon C < \epsilon.	
\end{align*}
Hence, by applying \eqref{WeakStarLimitL1} with $\psi=\psi^{\epsilon}$, we obtain 
\begin{equation*}
	0 \leq \EX\left[1-\int_{\overline{\R}^{h}}\psi^{\epsilon}(\eta)d\xi(\eta)\right] < \epsilon.
\end{equation*}
However, since $\R$ is identified in \eqref{Rcompactified} with the set $\{\eta_{r}\,\vert\,r\in\R\}$, 
it follows that 
\begin{equation*}
	\EX\left[1-\int_{\overline{\R}^{h}}\psi^{\epsilon}(\eta)d\xi(\eta)\right]
		= \EX\left[1-\int_{\R}\varphi^{\epsilon}(r)d\xi(r)\right] 
	\underset{\epsilon\to 0}\longrightarrow\EX\left[1-\xi(\R)\right].
\end{equation*}
Therefore $\xi_{\omega}(\R)=1$ for $\PR$-almost every $\omega\in\Omega$. This completes the proof of 
the lemma.
\end{proof}
\begin{proof}[\bfseries Proof of \Cref{L1compactification}]
If $\mathbf{h}$ is an element of $\overline{L_{1}}^{h}$, then there exists a net 
$(g_{\alpha})_{\alpha}$ in $L_{1}$ such that $\mathbf{h}_{g_{\alpha}}$ converges pointwise 
on $L_{1}$ to $\mathbf{h}$. By \Cref{lemmaA}, there exists a subnet $(g_{\beta})_{\beta}$ 
and a random measure $\xi$ on $\overline{\R}^{h}$ such that the net of random measures 
$(\delta_{\eta_{g_{\beta}}})_{\beta}$ on $\overline{\R}^{h}$ converges weakly-star to 
$\xi$ in $L_{1}(\Omega,C(\overline{\R}^{h}))^{*}$. 

Let $f$ be an element of $L_{1}$. For each $\beta$, the internal metric functional 
$\mathbf{h}_{g_{\beta}}$ on $L_{1}$ can be written as
\begin{equation}\label{auxIMFL1}
	\mathbf{h}_{g_{\beta}}(f)
		= \EX\left[\abs{f-g_{\beta}}-\abs{g_{\beta}}\right]
		= \EX\left[\int_{\overline{\R}^{h}}
			\eta(f)d\delta_{\eta_{g_{\beta}}}(\eta)\right]
		= \EX\left[\eta_{g_{\beta}}(f)\right].
\end{equation}
Consider the mapping $\omega\mapsto\psi_{\omega}^{f}$ from $\Omega$ to $C(\overline{\R}^{h})$ 
defined by $\psi_{\omega}^{f}(\eta):=\eta(f(\omega))$ for all $\eta\in\overline{\R}^{h}$. 
We claim that $\psi^{f}\in L_{1}(\Omega,C(\overline{\R}^{h}))$. Indeed, if $(\eta^{(k)})_{k\in\N}$ 
is a sequence in $\overline{\R}^{h}$ converging to some $\eta\in\overline{\R}^{h}$, then for 
$\PR$-almost every $\omega\in\Omega$ we have $\eta^{(k)}(f(\omega))\to\eta(f(\omega))$ as $k\to\infty$. 
Moreover, the function $\omega\mapsto \norm{\psi_{\omega}^{f}}_{C(\overline{\R}^{h})}$ is 
measurable and 
\begin{equation*}
	\EX\left[\norm{\psi^{f}}_{C(\overline{\R}^{h})}\right]
		= \EX\left[\sup_{\eta\in\overline{\R}^{h}}\abs{\eta(f)}\right]\\
		= \int_{\Omega}\sup_{\eta\in\overline{\R}^{h}}\abs{\eta(f(\omega))}d\PR(\omega)\\
		\leq \EX\left[\abs{f}\right] < \infty.
\end{equation*}
Note now that the formula \eqref{auxIMFL1} becomes 
$\mathbf{h}_{g_{\beta}}(f)=\EX\left[\psi^{f}(\eta_{g_{\beta}})\right]$. By applying 
the limit \eqref{WeakStarLimitL1}, we obtain
\begin{equation*}
	\mathbf{h}(f) 
		= \lim_{\beta} \mathbf{h}_{g_{\beta}}(f) 
		= \EX\left[\int_{\overline{\R}^{h}}
			\eta(f)d\xi(\eta)\right].
\end{equation*}
	
Conversely, assume that $\xi$ is a random measure on $\overline{\R}^{h}$. We need to find a net 
$(g_{\gamma})_{\gamma}$ in $L_{1}$ such that $\mathbf{h}_{g_{\gamma}}$ converges pointwise on 
$L_{1}$ to the functional $\mathbf{h}$ given by the formula \eqref{MetricFuncL1}. For this 
purpose, we introduce first some notations: let $\gamma$ denote a finite measurable partition 
$\left[S_{\gamma}^{1},...,S_{\gamma}^{\abs{\gamma}}\right]$ of $\Omega$, i.e., 
$\Omega=\cup_{j=1}^{\abs{\gamma}}S_{\gamma}^{j}$ with $S_{\gamma}^{j}\in\Sigma$ and 
$\PR(S_{\gamma}^{j})>0$ for all $j=1,...,\abs{\gamma}$, and also $S_{\gamma}^{j}\cap S_{\gamma}^{k}=\emptyset$ 
for $j\neq k$. Here $\abs{\gamma}$ denotes the number of elements of $\gamma$. Denote by $\T$ 
the collection of all finite measurable partitions of $\Omega$. The set $\T$ becomes a directed 
set with the partial order $\succcurlyeq$ defined by $\gamma\succcurlyeq\tilde{\gamma}$ if and only 
if $\gamma$ is a refinement of $\tilde{\gamma}$, i.e., for each $S_{\gamma}^{j}\in\gamma$ there 
exists $S_{\tilde{\gamma}}^{k}\in\tilde{\gamma}$ such that 
$\PR(S_{\gamma}^{j}\setminus S_{\tilde{\gamma}}^{k})=0$.

Let us first suppose that the random measure $\xi$ is constant of the form
\begin{equation}\label{constantDiracsum}
	\xi_{\omega}=\sum_{k=1}^{N}\theta_{k}\delta_{\eta^{(k)}},
\end{equation}
where $N\in\N$, $\sum_{k=1}^{N}\theta_{k}=1$, $\theta_{k}\in\left[0,1\right]\cap\Q$ and 
$\eta^{(k)}\in\overline{\R}^{h}$ for all $k=1,...,N$. Since the probability measure $\PR$ is 
non-atomic, for each finite measurable partition $\gamma=\left[S_{\gamma}^{1},...,S_{\gamma}^{\abs{\gamma}}\right]$ 
we can divide each $S_{\gamma}^{j}$ into further $N$ pairwise disjoint subsets 
$\{S_{\gamma}^{j,1},...,S_{\gamma}^{j,N}\}$ such that 
$\PR(S_{\gamma}^{j,k})=\theta_{k}\PR(S_{\gamma}^{j})$ for all $k=1,...,N$. Now, we can define 
the net $(g_{\gamma})_{\gamma\in\T}$ in $L_{1}$ by
\begin{equation}\label{TheNet}
	g_{\gamma}(\omega):=\begin{cases}
						-\abs{\gamma}, &\text{if } \eta^{(k)}=\eta_{-\infty}, \\
						+\abs{\gamma}, &\text{if } \eta^{(k)}=\eta_{+\infty}, \\		
						r_{k}, &\text{if } \eta^{(k)}=\eta_{r_{k}},				
					\end{cases}
\end{equation} 
for all $\omega\in S_{\gamma}^{1,k} \cup\cdots\cup S_{\gamma}^{\abs{\gamma},k}$ with $k=1,...,N$. 
Then the net of internal metric functionals $(\mathbf{h}_{g_{\gamma}})_{\gamma\in\T}$ 
converges pointwise on $L_{1}$ to the functional $\mathbf{h}$ given by the formula \eqref{MetricFuncL1} 
and represented by the random measure \eqref{constantDiracsum}. 

Next, suppose that $\xi$ is a general random measure on $\overline{\R}^{h}$. For each finite 
measurable partition $\gamma=\left[S_{\gamma}^{1},...,S_{\gamma}^{\abs{\gamma}}\right]$ of 
$\Omega$ define $\xi^{(\gamma)}\in\Lambda_{\infty}(\Omega,\Meas(\overline{\R}^{h}))$ by
\begin{equation}\label{NetOfMesuresIndexedbyPartitions}
	\iprod{\varphi,\xi_{\omega}^{(\gamma)}}
		:=\sum_{j=1}^{\abs{\gamma}}\frac{1}{\PR(S_{\gamma}^{j})}
			\EX\left[\CHT_{S_{\gamma}^{j}}
			\int_{\overline{\R}^{h}}\varphi(\eta) d\xi(\eta)\right]
			\CHT_{S_{\gamma}^{j}}(\omega),
\end{equation}
for all $\varphi\in C(\overline{\R}^{h})$. It follows that 
$\xi_{\omega}^{(\gamma)}\in\Prob(\overline{\R}^{h})$ for $\PR$-almost every $\omega\in\Omega$, and hence 
$\xi^{(\gamma)}$ is a random measure on $\overline{\R}^{h}$. We proceed to prove that 
\begin{equation}\label{WeakStarApprox}
	\xi^{(\gamma)}\overset{w*}{\underset{\gamma}\longrightarrow}\xi 
		\text{ in } L_{1}(\Omega,C(\overline{\R}^{h}))^{*}. 
\end{equation}
Let $\psi$ be an element of $L_{1}(\Omega,C(\overline{\R}^{h}))$ and let $\epsilon$ be a 
small positive real number. Then there exists a measurable finite partition
$\gamma_{\epsilon}=\left[S_{\gamma_{\epsilon}}^{1},...,S_{\gamma_{\epsilon}}^{\abs{\gamma_{\epsilon}}}\right]$ 
of $\Omega$ and a $C(\overline{\R}^{h})$-valued simple function
\begin{equation*}
	\omega \mapsto \psi^{(\gamma_{\epsilon})}_{\omega} 
			= \sum_{j=1}^{\abs{\gamma_{\epsilon}}}\varphi_{j}\CHT_{S_{\gamma_{\epsilon}}^{j}}(\omega)
			\text{ with } \varphi_{j}\in C(\overline{\R}^{h}) \text{ for } j=1,...,\abs{\gamma_{\epsilon}} 
\end{equation*} 
such that $\EX\left[\norm{\psi^{(\gamma_{\epsilon})}-\psi}_{C(\overline{\R}^{h})}\right] < \epsilon/2$. 
Therefore, for every $\gamma\succcurlyeq\gamma_{\epsilon}$ we have
\begin{align*}
	\bigg\vert\EX\left[\int_{\overline{\R}^{h}}\psi(\eta)d\xi^{(\gamma)}(\eta)\right]
		&-\EX\left[\int_{\overline{\R}^{h}}\psi(\eta)d\xi(\eta)\right]\bigg\vert \\
		& \leq \EX\left[\int_{\overline{\R}^{h}}\abs{\psi(\eta)-\psi^{(\gamma_{\epsilon})}(\eta)}d\xi^{(\gamma)}(\eta)\right] \\
		&\quad + \abs{\EX\left[\int_{\overline{\R}^{h}}\psi^{(\gamma_{\epsilon})}(\eta)d\xi^{(\gamma)}(\eta)
			- \int_{\overline{\R}^{h}}\psi^{(\gamma_{\epsilon})}(\eta)d\xi(\eta)\right]} \\
		&\quad + \EX\left[\int_{\overline{\R}^{h}}\abs{\psi^{(\gamma_{\epsilon})}(\eta)-\psi(\eta)}d\xi(\eta)\right] \\
		&< \epsilon/2 + 0 + \epsilon/2 =\epsilon.
\end{align*}
Hence \eqref{WeakStarApprox} holds.
Finally, we observe that the random measure $\xi^{(\gamma)}$ given by \eqref{NetOfMesuresIndexedbyPartitions} 
is constant on each $S_{\gamma}^{j}$ of the finite partition 
$\gamma=\left[S_{\gamma}^{1},...,S_{\gamma}^{\abs{\gamma}}\right]$
of $\Omega$. More precisely, 
\begin{equation}\label{SimpleMeasures}
	\xi_{|S_{\gamma}^{j}}^{(\gamma)} \in \Prob(\overline{\R}^{h}) \text{ for each } j=1,...,\abs{\gamma}.
\end{equation}
Furthermore, each probability measure \eqref{SimpleMeasures} can be approximated by measures
of the form \eqref{constantDiracsum} with respect to the weak-star topology $\sigma(\Meas(\overline{\R}^{h}),C(\overline{\R}^{h}))$.
This permits us to construct a net of the form \eqref{TheNet} on each $S_{\gamma}^{j}$. By a simple diagonal argument
with respect to the directed set $\T$ of measurable finite partitions, we can construct a 
net $(g_{\gamma})_{\gamma\in\T}$ in $L_{1}$ such that $\mathbf{h}_{g_{\gamma}}$ converges pointwise on $L_{1}$,
as the partition $\gamma$ gets finer and finer, to the functional $\mathbf{h}$ given by the formula \eqref{MetricFuncL1}. 
\end{proof}

\begin{proof}[\bfseries Proof of \Cref{Lpcompactification}]
If $\mathbf{h}\in \overline{L_{p}}^{h}$ then there exists a net $(g_{\alpha})_{\alpha}$ in 
$L_{p}$ such that $\mathbf{h}_{g_{\alpha}}$ converges pointwise on $L_{p}$ to $\mathbf{h}$.
The net $(g_{\alpha})_{\alpha}$ is either bounded or unbounded with respect to the $L_{p}$-norm. 

Let us first suppose that the net is bounded in $L_{p}$, i.e., $\sup_{\alpha}\norm{g_{\alpha}}_{L_{p}} < \infty$. 
By taking a subnet if necessary, we may assume that 
\begin{equation*}
	\norm{g_{\alpha}}_{L_{p}}\underset{\alpha}{\longrightarrow} c. 
\end{equation*}
Due to the inclusion $L_{p}\subset L_{1}$, the net $(g_{\alpha})_{\alpha}$ is bounded with respect
to the $L_{1}$-norm. By \Cref{lemmaA}, there exists a subnet $(g_{\beta})_{\beta}$ and a 
random measure $\xi$ on $\R$ for which the limit \eqref{WeakStarLimitL1} holds.
Now, let $f$ be an element of $L_{p}$. The internal metric functional $\mathbf{h}_{g_{\beta}}$
on $L_{p}$ given by \eqref{MetricFuncLp} becomes
\begin{equation}\label{newxLpMF}
	\mathbf{h}_{g_{\beta}}(f) 
		= \left(\EX\left[\int_{\R} (\abs{f-r}^{p} - \abs{r}^{p}) d\delta_{g_{\beta}}(r)\right] 
			  + \norm{g_{\beta}}_{L_{p}}^{p} \right)^{1/p} - \norm{g_{\beta}}_{L_{p}}.
\end{equation}
Let $\psi^{f}:\Omega\to C(\overline{\R}^{h})$ be the mapping defined by
\begin{equation*}
	\psi_{\omega}^{f}(\eta)
		:=\begin{cases}
				\abs{f(\omega)-r}^{p}-\abs{r}^{p}	&\text{ if } \eta=\eta_{r},\\
				-\SGN(f(\omega))\infty		&\text{ if } \eta=\eta_{+\infty},\\
				 \SGN(f(\omega))\infty		&\text{ if } \eta=\eta_{-\infty}.
	\end{cases}
\end{equation*}
Then \eqref{newxLpMF} becomes
$\mathbf{h}_{g_{\beta}}(f) = \big(\EX\left[\psi^{f}(\eta_{g_{\beta}})\right]
			+ \norm{g_{\beta}}_{L_{p}}^{p}\big)^{1/p} - \norm{g_{\beta}}_{L_{p}}$.
Finally, due to the limit \eqref{WeakStarLimitL1} we obtain
\begin{align*}
	\mathbf{h}_{g_{\beta}}(f) 
		&{\underset{\beta}\longrightarrow} 
		   \left(\EX\left[\int_{\R}\left(\abs{f-r}^{p}-\abs{r}^{p}\right) d\xi(r)\right] 
			+ c^{p}\right)^{1/p} - c \\
		&= \left(\EX\left[\int_{\R}\abs{f-r}^{p}d\xi(r)\right] 
				- \EX\left[\int_{\R}\abs{r}^{p}d\xi(r)\right]
				+ c^{p}\right)^{1/p} - c,
\end{align*}
with $c^{p} - \EX\left[\int_{\R}\abs{r}^{p}d\xi(r)\right]\geq 0$. 

On the other hand, if the net $(g_{\alpha})_{\alpha}$ is unbounded in $L_{p}$, by taking a subnet,
we may assume that 
\begin{equation*}
	\norm{g_{\alpha}}_{L_{p}}\underset{\alpha}{\longrightarrow}\infty.
\end{equation*}
By uniform convexity of the dual space $L_{p}^{*}\cong L_{p/(p-1)}$ and
\cite[Lemma 5.3]{Gutierrez2018}, there exists a subnet $(g_\beta)_\beta$ and an element $\zeta$ 
of the closed unit ball of $L_{p/(p-1)}$ such that
\begin{equation*}
	\mathbf{h}_{g_{\beta}}(f)\underset{\beta}{\longrightarrow} -\EX\left[f\zeta\right]  
	\text{ for all } f\in L_{p}.
\end{equation*}

Conversely, assume that $\zeta$ is an arbitrary element of the closed unit ball of $L_{p/(p-1)}$.
Pick an increasing sequence $\{A_{n}\}_{n\in\N}$ of measurable subsets of $\Omega$ with 
$A_{1}\neq\emptyset$ and $\cup_{n\in\N}A_{n}=\Omega$. Define now the sequence 
$(\zeta_{n})_{n\in\N}$ in $L_{p/(p-1)}$ by
\begin{equation*}
	\zeta_{n}
		:= \CHT_{A_{n}}\zeta
			+ \CHT_{\Omega\setminus A_{n}}\Big(\dfrac{1-\norm{\zeta}_{L_{p/(p-1)}}^{p/(p-1)}}{\PR(\Omega\setminus A_{n})}
				+ \abs{\zeta}^{p/(p-1)}\Big)^{(p-1)/p}.
\end{equation*}
Hence $\norm{\zeta_{n}}_{L_{p/(p-1)}}=1$ for all $n\in\N$. Furthermore, we observe that
$\zeta_{n}$ converges weakly to $\zeta$ in $L_{p/(p-1)}$. 
Due to the $L_{p/(p-1)}/L_{p}$--duality, for each $n\in\N$
there exists $\tilde{g}_{n}\in L_{p}$ with $\norm{\tilde{g_{n}}}_{L_{p}}=1$ such that 
$\EX\left[ \tilde{g}_{n}\zeta_{n}\right]=1$. By letting $g_{n}=n\tilde{g}_{n}$ for each $n\in\N$
and proceeding as in the proof of \cite[Lemma 5.3]{Gutierrez2018}, we can show that 
$\lim_{n\to\infty}\mathbf{h}_{g_{n}}(f)= -\EX\left[f\zeta\right]$ for all $f\in L_{p}$.
\end{proof}
\section{Applications}\label{applications}
\subsection{The \texorpdfstring{$L_{p}$}{Lp}-mean ergodic theorem}
Let $p \in ]1,\infty[$. Assume that $(\Omega,\PR)$ is a standard probability space. Let $T$ be a 
linear operator on $L_{p}=L_{p}(\Omega,\PR)$ such that $\norm{Tf}_{L_{p}}\leq\norm{f}_{L_{p}}$ 
for all $f\in L_{p}$. For an arbitrary element $g\in L_{p}$ define the mapping 
$F_{g}: L_{p}\to L_{p}$ by $F_{g}(f):=Tf + g$ for all $f\in L_{p}$. Hence $F_{g}$ defines a $1$-Lipschitz 
self-mapping of $L_p$. We observe that 
\begin{equation*}
	F_{g}^{n}(0)=\sum_{k=0}^{n-1}T^{k}g \;\text{ for all } n\geq 1.
\end{equation*}
Karlsson's \emph{metric spectral principle} \cite{Karlsson2001, Karlsson2018} asserts that there 
exists a metric functional 
$\mathbf{h}\in \overline{L_{p}}^{h}$ such that
\begin{equation}\label{CR-MeanErgodicThm}
	\lim_{n\to\infty}-\frac{1}{n}\mathbf{h}(F_{g}^{n}(0))=\tau,
\end{equation}
where the \emph{escape rate} $\tau:=\lim_{n\to\infty}n^{-1}\norm{F_{g}^{n}(0)}_{L_{p}}$
is well-defined due to subadditivity of the sequence $(\norm{F_{g}^{n}(0)}_{L_{p}})_{n\geq 1}$.

If $\tau=0$ then we obtain the trivial strong limit
\begin{equation*}
	\lim_{n\to\infty}\frac{1}{n}F_{g}^{n}(0) 
		= \lim_{n\to\infty}\frac{1}{n}\sum_{k=0}^{n-1}T^{k}g = 0.
\end{equation*}

Suppose now that $\tau>0$. Then the metric functional $\mathbf{h}$ in \eqref{CR-MeanErgodicThm} 
must be unbounded from below. Hence it is neither of the form \eqref{FiniteMFLp} nor
the zero functional in \eqref{InfiniteMFLp}. Therefore, there exists 
$\zeta\in L_{p/(p-1)}$ with $0<\norm{\zeta}_{L_{p/(p-1)}}\leq 1$ such that 
$\mathbf{h}(f)=-\EX\left[f\zeta\right]$ for all $f\in L_{p}$. We now proceed to show that 
$\zeta$ must be an element of the unit sphere of $L_{p/(p-1)}$. Indeed, for every $n\geq 1$ 
we have 
\begin{equation*}
	-\frac{1}{n}\mathbf{h}(F_{g}^{n}(0))
		= \frac{1}{n}\EX\left[F_{g}^{n}(0)\zeta\right]
		\leq \frac{1}{n}\norm{F_{g}^{n}(0)}_{L_{p}}\norm{\zeta}_{L_{p/(p-1)}}.
\end{equation*}
By \eqref{CR-MeanErgodicThm}, it follows that $\tau\leq \tau\norm{\zeta}_{L_{p/(p-1)}}$.
Hence $\norm{\zeta}_{L_{p/(p-1)}}=1$. 
On the other hand, by $L_{p}/L_{p/(p-1)}$--duality, there exists $g^{*}\in L_{p}$ with $\norm{g^{*}}_{L_{p}}=1$ 
such that $\EX\left[g^{*}\zeta\right]=1$. Next, we claim that
\begin{equation}\label{ErgLim}
	\lim_{n\to\infty}\frac{1}{n}F_{g}^{n}(0) 
		= \lim_{n\to\infty}\frac{1}{n}\sum_{k=0}^{n-1}T^{k}g 
		= \tau g^{*}.
\end{equation}
Indeed, for every $n\geq 1$ we have
\begin{align*}
	\frac{1}{n}\norm{F_{g}^{n}(0)}_{L_{p}} + \norm{\tau g^{*}}_{L_{p}} 
		\geq \norm{\frac{1}{n}F_{g}^{n}(0) + \tau g^{*}}_{L_{p}}  
		&\geq \EX\left[\frac{1}{n}F_{g}^{n}(0)\zeta\right]
			+ \EX\left[\tau g^{*}\zeta\right] \\ 
		&= -\frac{1}{n}\mathbf{h}(F_{g}^{n}(0)) + \tau.	
\end{align*}
Due to \eqref{CR-MeanErgodicThm} we obtain 
$\norm{\frac{1}{n}F_{g}^{n}(0) + \tau g^{*}}_{L_{p}}\to 2\tau$ as $n\to\infty$. 
Since $L_{p}$ is uniformly convex, it follows that 
$\norm{\frac{1}{n}F_{g}^{n}(0) - \tau g^{*}}_{L_{p}}\to 0$ as $n\to\infty$. 
Hence \eqref{ErgLim} holds.

\subsection{Alspach's fixed-point free isometry}
Alspach \cite{Alspach1981} presented an example of an isometry on a weakly compact convex 
subset of $L_{1}$ with no fixed points. More precisely, let $\Omega$ be the interval 
$[0,1]$ and let $\PR$ be the Lebesgue measure. Consider the subset $K$ of $L_{1}$ defined by
\begin{equation*}
	K=\left\{ f\in L_{1} \mid 
		0\leq f \leq 2 \text{ a.e. and } \EX\left[f\right] = 1 \right\}.
\end{equation*}
The set $K$ is a weakly compact convex subset of $L_{1}$. The mapping $F: K\to K$ defined by
\begin{equation*}
	F(f)(\omega):=\begin{cases}
		\min\{ 2\,,\,2f(2\omega) \}, &\text{if }\, 0\leq\omega\leq 1/2,\\
		\max\{ 0\,,\,2f(2\omega-1)-2 \}, &\text{if }\, 1/2<\omega \leq 1,
	\end{cases}
\end{equation*}
is an isometry, i.e., $\norm{F(f)-F(g)}_{L_{1}}=\norm{f-g}_{L_{1}}$	for all $f,g\in K$. 
Alspach proved that $F$ has no fixed points in $K$. We verify this fact by using metric 
functionals on $L_{1}$. 

First, we denote $g_{0}=\CHT_{\Omega}$ and $g_{n}=F(g_{n-1})$ for all
 $n\in\N$. More precisely, for every $n\in\N$ we have
\begin{equation*}
	g_{n}(\omega)=\begin{cases}
			2&\text{if }\, \omega\in \bigcup_{j=0}^{2^{n-1}-1} \left[2j2^{-n},(2j+1)2^{-n}\right],\\
			0&\text{otherwise}.
		\end{cases}
\end{equation*}
Equivalently, we can write $g_{n}=\CHT_{\Omega} + r_{n}$, where 
$r_{n}(\omega)=\SGN(\sin(2^{n}\pi\omega))$ is the $n$-th Rademacher function. Therefore, for 
every $f\in L_{1}$ we obtain
\begin{equation*}
		\lim_{n\to\infty}\mathbf{h}_{g_{n}}(f)
			= \mathbf{h}(f)
			= \EX\left[\int_{\overline{\R}^{h}}\eta(f)d\xi(\eta)\right],
\end{equation*}
where $\xi$ is the constant random measure on $\overline{\R}^{h}$ given by 
$\xi_{\omega}=\frac{1}{2}\delta_{\eta_{0}}+\frac{1}{2}\delta_{\eta_{2}}$ with 
$\eta_{0},\eta_{2}\in\overline{\R}^{h}$. That is,
\begin{equation}\label{AlspachLIM}
	\mathbf{h}(f) = \EX\left[\frac{1}{2}\abs{f} + \frac{1}{2}\left(\abs{f-2}-2\right)\right]
	\text{ for all } f\in L_{1}. 
\end{equation} 
In particular, we observe that the metric functional \eqref{AlspachLIM}
vanishes on $K$, i.e., $\mathbf{h}(f)=0$ for all $f\in K$.

Now, suppose that $F$ has a fixed point $g^{*}$ in $K$. Hence 
\begin{align*}
		0=\mathbf{h}(g^{*})&= \lim_{n\to\infty}\mathbf{h}_{g_{n}}(g^{*})\\
									&= \lim_{n\to\infty} \left( \norm{F^{n}(g^{*}) - F^{n}(\CHT_{\Omega})}_{L_{1}}-
										\norm{F^{n}(\CHT_{\Omega})}_{L_{1}} \right)\\
									&= \norm{g^{*}-\CHT_{\Omega}}_{L_{1}}-1.
\end{align*}
The only solutions on $K$ to $\norm{g^{*}-\CHT_{\Omega}}_{L_{1}}=1$ are of the form $g^{*}=2\CHT_{A}$,
where $A$ is a set of measure $1/2$. However, this is not possible because we would have
\begin{equation*}
		\mathbf{h}= \lim_{n\to\infty}\mathbf{h}_{F^{n}(g^{*})}
						= \lim_{n\to\infty}\mathbf{h}_{g^{*}}
						= \mathbf{h}_{g^{*}},
\end{equation*}
which is a contradiction. Therefore $F$ has no fixed points in $K$. 

\section*{Acknowledgments}
The author is very grateful to Prof. Anders Karlsson, Prof. Kalle Kyt\"ol\"a,  
and Prof. Olavi Nevanlinna for many valuable discussions and
suggestions. 
The author is also thankful to the anonymous referees for valuable suggestions that
improved the presentation of this paper.

This work was supported by the Academy of Finland, Grant No. 288318.

\end{document}